\documentclass[a4,12pt]{amsart}
\oddsidemargin 0mm
\evensidemargin 0mm
\topmargin 0mm
\textwidth 160mm
\textheight 230mm
\tolerance=9999
\usepackage{amssymb,amstext,amsmath,amscd,amsthm,amsfonts,enumerate,latexsym, comment}
\usepackage{color}
\usepackage[dvipdfmx]{graphicx}
\usepackage[all]{xy}

\usepackage{url}
\theoremstyle{plain}
\newtheorem{thm}{Theorem}[section]

\newtheorem*{thm*}{Theorem}
\newtheorem*{cor*}{Corollary}

\newtheorem{prop}[thm]{Proposition}

\newtheorem{lem}[thm]{Lemma}
\newtheorem{cor}[thm]{Corollary}

\newtheorem{claim}{Claim}
\newtheorem*{claim*}{Claim}

\theoremstyle{definition}
\newtheorem{defn}[thm]{Definition}

\newtheorem{ex}[thm]{Example}

\newtheorem{rem}[thm]{Remark}

\newtheorem{fact}[thm]{Fact}

\newtheorem{setup}[thm]{Setup}

\theoremstyle{remark}

\newtheorem*{proof of claim}{{\sl Proof of Claim}}

\numberwithin{equation}{thm}


\def\Im{\operatorname{Im}}

\def\Hom{\operatorname{Hom}}

\def\Max{\operatorname{Max}}

\def\Im{\mathrm{Im}}

\def\tr{\mathrm{tr}}

\def\rank{\mathrm{rank}}

\def\m{\mathfrak m}

\newcommand{\rme}{\mathrm{e}}

\newcommand{\rmQ}{\mathrm{Q}}

\newcommand{\fkm}{\mathfrak{m}}

\newcommand{\fkp}{\mathfrak{p}}

\newcommand{\mapright}[1]{%
\smash{\mathop{%
\hbox to 1cm{\rightarrowfill}}\limits^{#1}}}

\newcommand{\mapleft}[1]{%
\smash{\mathop{%
\hbox to 1cm{\leftarrowfill}}\limits_{#1}}}

\def\Ass{\operatorname{Ass}}

\def\ol{\overline}

\def\tr{\mathrm{tr}}

\title{Reflexive modules over Arf local rings}

\author{Ryotaro Isobe}
\address{General Education Division, National Institute of Technology, Oshima College, 1091-1, Oazakomatsu, Suooshima-cho, Oshima-gun, Yamaguchi, 742-2193, Japan}
\email{isobe.ryotaro@oshima-k.ac.jp}

\author{Shinya Kumashiro}
\address{National Institute of Technology, Oyama College, 771 Nakakuki, Oyama, Tochigi, 323-0806, Japan}
\email{skumashiro@oyama-ct.ac.jp}
\email{shinyakumashiro@gmail.com}

\thanks{2020 {\em Mathematics Subject Classification.} 13C05, 13H10, 13B22}
\thanks{{\em Key words and phrases.} reflexive module, Arf ring, integrally closed ideal, trace ideal}
\thanks{The first author was supported by JSPS KAKENHI Grant Number JP21K13767. The second author was supported by JSPS KAKENHI Grant Number JP21K13766 and by Grant for Basic Science Research Projects from the Sumitomo Foundation (Grant number 2200259).}


\begin{document}

\begin{abstract}
We provide a certain direct-sum decomposition of reflexive modules over (one-dimensional) Arf local rings. We also see the equivalence of three notions, say, integrally closed ideals, trace ideals, and reflexive modules of rank one (i.e., divisorial ideals) up to isomorphisms in Arf rings. As an application, we obtain the finiteness of indecomposable reflexive modules, up to isomorphism, for analytically irreducible Arf local rings.
\end{abstract}

\maketitle

\section{Introduction}\label{section1}

The study of maximal Cohen-Macaulay modules over Cohen-Macaulay local rings, which is known as the Cohen-Macaulay representation theory, is an important and a classical subject. We cannot cover all literature comprehensively in this article; instead we refer the reader to the book such as \cite{LW}. One problem in this study is when a Cohen-Macaulay local ring has a finite number of irreducible maximal Cohen-Macaulay modules up to isomorphism (such a ring is called {\it of finite Cohen-Macaulay type}). In dimension one, there is a complete classification of rings of finite Cohen-Macaulay type (\cite[4.10 Theorem]{LW}). In relation to the result, Bass proved in his ``ubiquity'' paper \cite{Ba} that the following hold for rings with multiplicity two. 

\begin{fact} {\rm (\cite[4.18 Theorem]{LW})}\label{f11}
Let $R$ be a Cohen-Macaulay local ring of dimension one, with multiplicity two. Then the following hold true. 
\begin{enumerate}[\rm(a)] 
\item Every maximal Cohen-Macaulay module is isomorphic to a direct sum of ideals of $R$. 
\item The ring $R$ has finite Cohen-Macaulay type if and only if $R$ is analytically unramified, that is, the completion of $R$ is reduced. 
\end{enumerate}
\end{fact}




In this article, by focusing on reflexive modules, we generalize the above result from rings with multiplicity two to Arf local rings. For the definition of reflexive modules, refer Setup \ref{setup1}. Here, we just note that in dimension one, all reflexive modules are maximal Cohen-Macaulay modules and both coincides for rings with multiplicity two. 
We now explain on Arf rings. To simplify the description, let $(R, \fkm)$ be a one-dimensional Cohen-Macaulay local ring. Then, $R$ is an {\it Arf ring} if and only if $R$ satisfies the following condition (\cite{L}): 
for every $\fkm$-primary integrally closed ideal $I$, there exists $a\in I$ such that $I^2=aI$.

\vskip.5\baselineskip

The notion of Arf rings originates from the classification of certain singular points of plane curves by Arf \cite{A}. Thereafter, in 1971 Lipman generalized the notion for one-dimensional semi-local rings and characterized as above (\cite{L}). The study of Arf rings still has been explored, see, for example, \cite{CCCEGIM, Is}.
The aim of this study is to clarify the structure of reflexive modules over Arf local rings, and the main result is as follows. For the definition of trace ideals, see Subsection \ref{subsec22}.

\begin{thm} \label{mainthm} {\rm (Theorems \ref{equiv} and \ref{main2})}
Let $R$ be an Arf local ring with the maximal ideal $\fkm$. Then, the following hold true.
\begin{enumerate}[{\rm (a)}] 
\item For an $\fkm$-primary ideal $I$, the following are equivalent:
\begin{enumerate}[{\rm (i)}]  
\item $I$ is reflexive as an $R$-module.
\item $I$ is isomorphic to some trace ideal.
\item $I$ is isomorphic to some integrally closed ideal.
\end{enumerate} 
\item Suppose that the integral closure $\ol{R}$ of $R$ is local. Then, every reflexive module with positive rank is isomorphic to a direct sum of $\fkm$-primary integrally closed ideals of $R$.
\end{enumerate} 
\end{thm}

The class of Arf rings contains all Cohen-Macaulay local rings with multiplicity two (\cite[Example, p.664]{L}). 
Furthermore, Cohen-Macaulay local rings with multiplicity two are Gorenstein; hence, all maximal Cohen-Macaulay modules are reflexive. 
Therefore, Theorem \ref{mainthm}(b) fully extends Fact \ref{f11} from rings with multiplicity two to Arf rings, under the additional assumption that $\ol{R}$ is local. As an application to the above result, we further obtain the finiteness of indecomposable reflexive modules over Arf local domains (Corollary \ref{corcor}). We should note that the finiteness result has been independently announced in \cite{D, DL} at about the same time (but the proof seems different).


\vskip.5\baselineskip

We now explain how this article is organized. In Section \ref{section2}, we collect facts on trace ideals and Arf rings to prove our main result. In Section \ref{section3}, we establish Theorem \ref{mainthm}. Below, we summarize the notations that we used in this article.

\begin{setup}\label{setup1}
In what follows, all rings are commutative Noetherian rings and all modules are finitely generated. 
Let $R$ be a ring and $M$ an $R$-module. $\rmQ(R)$ and $\ol{R}$ denote the total ring of fraction of $R$ and the integral closure of $R$, respectively. $M$ is called {\it torsionfree} if $M\to \rmQ(R)\otimes_R M$ is injective. We denote by $(-)^*$ the $R$-dual $\Hom_R(-, R)$. We consider the canonical map 
\begin{center}
$h:M\to M^{**}; x\mapsto (f\mapsto f(x))$, where $x\in M$ and $f\in M^*$. 
\end{center}
$M$ is called {\it torsionless} (resp. {\it reflexive}) if $h$ is injective (resp. bijective) (\cite[page 19]{BH}). In our assumption on $R$ and $M$, the torsionless of $M$ implies that the torsionfreeness of $M$ (see \cite[Excercise 1.4.20]{BH}). 
For each non-negative integer $\ell$, we say that $M$ has {\it rank $\ell$} if $M_\fkp$ is an $R_\fkp$-free module of rank $\ell$ for all $\fkp\in \Ass R$. 


For an ideal $I$, $\ol{I}$ denotes the {\it integral closure} of $I$.
We say that $I$ is a {\it fractional ideal} if $I$ is a finitely generated $R$-submodule of $\rmQ(R)$ such that $I$ contains a non-zerodivisor of $R$. 
For fractional ideals $I$ and $J$, $I:J$ denotes the colon fractional ideal $\{\alpha\in \rmQ(R) \mid \alpha J\subseteq I\}$. We freely use the fact that $I:J\cong \Hom_R(J, I)$ (see \cite[Lemma 2.4.2]{SH}). 
\end{setup}

\section{Preliminaries}\label{section2}

In this section, we summarize several known results that we need in this article.

\subsection{trace ideals}\label{subsec22}

Let $R$ be a Noetherian ring, and let $M$ be a finitely generated $R$-module. 
Then, 
\begin{align*} 
\mathrm{tr}_R(M)=& \sum_{f\in M^*} \Im f \\
=& \Im (M^* \otimes_R M \to R)\text{, where $f\otimes x\mapsto f(x)$ for $f\in M^{*}$ and $x\in M$, }
\end{align*}
is called the {\it trace ideal} of $M$. An ideal $I$ is called a {\it trace ideal} if $I=\mathrm{tr}_R(M)$ for some $R$-module $M$. 
For an ideal $J$ containing a non-zerodivisor of $R$, we have 
\[
\tr_R(J)=(R:J)J
\] 
by identifying the map $J^* \otimes_R J \to R$ with $(R:J) \otimes_R J \to R; f\otimes x\mapsto fx$ for $f\in R:J$ and $x\in J$. 


\begin{lem} {\rm (\cite{GIK2, LP})}\label{tuesday}
Let $I$ be an ideal of $R$ and $\iota:I\to R$ be the embedding. Then, the following are equivalent:
\begin{enumerate}[{\rm (a)}] 
\item $I$ is a trace ideal, 
\item $I=\mathrm{tr}_R(I)$, and 
\item $\Hom_R(I, I) \xrightarrow{\iota_*} \Hom_R(I, R)$ is bijective.
\end{enumerate} 	
\end{lem}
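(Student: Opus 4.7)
The plan is to prove the cycle $(b) \Rightarrow (a) \Rightarrow (b) \Leftrightarrow (c)$. The implication $(b) \Rightarrow (a)$ is immediate from the definition: if $I = \tr_R(I)$, then $I$ is a trace ideal, witnessed by taking $M = I$ itself.

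For $(a) \Rightarrow (b)$, I would first note that the inclusion $\iota \colon I \hookrightarrow R$ is an element of $I^{\ast}$ whose image is all of $I$, so the containment $I \subseteq \tr_R(I)$ holds unconditionally. For the reverse containment, assume $I = \tr_R(M) = \sum_{f \in M^{\ast}} f(M)$ and fix an arbitrary $g \in I^{\ast}$. Because each $f \in M^{\ast}$ has image contained in $\tr_R(M) = I$, we may view $f$ as a map $M \to I$, and hence the composition $g \circ f$ is a well-defined element of $M^{\ast}$. It follows that $g(f(M)) = (g \circ f)(M) \subseteq \tr_R(M) = I$. Summing over $f \in M^{\ast}$ gives $g(I) \subseteq I$, and then summing over $g \in I^{\ast}$ yields $\tr_R(I) \subseteq I$.

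For $(b) \Leftrightarrow (c)$, observe that $\iota_{\ast}$ is always injective since $\iota$ is a monomorphism and $\Hom_R(I, -)$ is left exact. Surjectivity of $\iota_{\ast}$ says precisely that every $g \in I^{\ast}$ factors through $\iota$, which is equivalent to $g(I) \subseteq I$ for every $g \in I^{\ast}$, i.e., $\tr_R(I) \subseteq I$. Combined with the automatic inclusion $I \subseteq \tr_R(I)$ coming from $\iota$, this is exactly $(b)$.

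The argument is essentially formal bookkeeping, and I do not anticipate any substantive obstacle. The only delicate point is the step $(a) \Rightarrow (b)$, where hypothesis $(a)$ must be invoked in the form ``every $f \in M^{\ast}$ has image inside $I$'', since this is what allows the composition $g \circ f$ to be formed and, crucially, to contribute to $\tr_R(M)$.
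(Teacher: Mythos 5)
Your proof is correct and rests on the same key observation as the paper's: composing a functional $g\in I^{*}$ with the (corestricted) maps $f\in M^{*}$ produces elements of $M^{*}$, forcing $g(I)\subseteq \tr_R(M)=I$. The paper runs this as a proof by contradiction for (a) $\Rightarrow$ (c) and dismisses the remaining implications as clear, whereas you argue directly and spell out (b) $\Leftrightarrow$ (c); the substance is the same.
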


\begin{proof} 
This is known, but we include a proof for the convenience of reader. 

(c) $\Rightarrow$ (b) $\Rightarrow$ (a) is clear.  

(a) $\Rightarrow$ (c): Suppose that there exists $f\in \Hom_R(I, R)$ such that $\Im f\not\subseteq I$, and choose $x\in \Im f\setminus I$. Set $I=\mathrm{tr}_R(M)$ for some $R$-module $M$. Then, the map $M^*\otimes_R M \to I \xrightarrow{f} R$ shows that $x\in \mathrm{tr}_R (M)=I$. This is a contradiction.
\end{proof}

\begin{cor}\label{ab2.2}
Let $I$ be an ideal containing a non-zerodivisor of $R$. 
Then, the following are equivalent:
\begin{enumerate}[{\rm (a)}] 
\item $I$ is a trace ideal, 
\item $I=(R:I)I$, and 
\item $I:I=R:I$.
\end{enumerate} 
\end{cor}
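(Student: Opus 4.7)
The plan is to first establish the formula $\tr_R(I) = I(R:I)$, after which all three equivalences follow by routine manipulations of fractional ideals.

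First I would use the hypothesis that $I$ contains a non-zerodivisor to identify $\Hom_R(I,R)$ with the fractional ideal $R:I$. Fix a non-zerodivisor $u \in I$; any $f \in \Hom_R(I,R)$ extends uniquely to a $\rmQ(R)$-linear endomorphism of $\rmQ(R)$, namely multiplication by $\alpha := f(u)/u$, and the condition $f(I) \subseteq R$ translates to $\alpha \in R:I$. Under this identification $\Im f = \alpha I$, so
\[
\tr_R(I) = \sum_{f \in \Hom_R(I,R)} \Im f = \sum_{\alpha \in R:I} \alpha I = (R:I) \cdot I,
\]
which is the first claim.

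The equivalence (a) $\Leftrightarrow$ (b) is then immediate from Lemma~2.2: $I$ is a trace ideal if and only if $I = \tr_R(I)$, and by the formula above the right-hand side is $I(R:I)$.

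For (b) $\Leftrightarrow$ (c), I would exploit two containments that hold automatically. Since $1 \in R:I$, we have $I \subseteq I(R:I)$, so (b) is equivalent to the reverse inclusion $I(R:I) \subseteq I$, which by the very definition of $I:I$ says exactly that $R:I \subseteq I:I$. On the other hand, the inclusion $I \subseteq R$ gives $I:I \subseteq R:I$ for free. Combining these shows that (b) is equivalent to $R:I = I:I$, which is (c). There is no serious obstacle in this argument; the only place where care is needed is the initial identification $\Hom_R(I,R) \cong R:I$, which is precisely where the hypothesis that $I$ contains a non-zerodivisor is used.
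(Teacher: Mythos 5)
Your proposal is correct, and it is essentially the intended argument: the paper omits the proof of this corollary, relying on the identification $\Hom_R(I,R)\cong R:I$ stated in the introduction together with Lemma 2.2, which is exactly what you carry out. The verification of the two automatic containments $I\subseteq I(R:I)$ and $I:I\subseteq R:I$ is the right way to finish, and there are no gaps.
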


\begin{rem} \label{rem2.3} 
\begin{enumerate}[\rm(a)]
\item {\rm (\cite[Proposition 2.8(viii)]{Lin})}: $\tr_R(M)\otimes_R A =\tr_A(M\otimes_R A)$ for any commutative flat $R$-algebra $A$. In particular, the localization of a trace ideal is a trace ideal. 
\item If $M$ has a positive rank, then $\mathrm{tr}_R(M)$ contains a non-zerodivisor of  $R$. 
\end{enumerate}
\end{rem}

\begin{proof} 
(a): Since $\mathrm{tr}_R(M)$ is the image of  $M^* \otimes_R M \to R$, where $f\otimes x\mapsto f(x)$, flat extensions are compatible. 

(b): Let $\fkp\in \Ass R$. By (1), $\tr_R(M)R_\fkp=\tr_{R_\fkp}(M_\fkp)$. Furthermore, if $M$ has a positive rank $\ell$, then $\tr_{R_\fkp}(M_\fkp)=\tr_{R_\fkp}(R_\fkp^{\ell})=R_\fkp$. It follows that $\tr_R(M)R_\fkp=R_\fkp$, which implies that $\tr_R(M)\not\subseteq \fkp$. Hence, $\mathrm{tr}_R(M)$ contains a non-zerodivisor of  $R$. 
\end{proof}

\begin{prop} {\rm (cf. \cite[(7.2) Proposition]{Ba})}\label{reflexive}
Let $M$ be a reflexive $R$-module of positive rank $\ell$. We define an endomorphism algebra $A:=\mathrm{tr}_R(M): \mathrm{tr}_R(M)$. Then, we can regard $M$ as an $A$-module by the action extending the $R$-action.
\end{prop}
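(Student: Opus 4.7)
The plan is to realize $M$ explicitly as $\Hom_R(M^*, \tr_R(M))$ and to transfer to $M$ the tautological $A$-module structure that this $\Hom$ carries by post-composition on the target. First I would record that, because $M$ has positive rank, Remark \ref{rem2.3} ensures $\tr_R(M)$ contains a non-zerodivisor of $R$, so $\tr_R(M)$ is a fractional ideal and $A = \tr_R(M):\tr_R(M)$ is a commutative subring of $\rmQ(R)$ with $R \subseteq A$; via the identification $I:J \cong \Hom_R(J,I)$ recalled in the introduction, one also has $A \cong \End_R(\tr_R(M))$ as rings.

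The heart of the argument is the identification $M \cong \Hom_R(M^*, \tr_R(M))$. The biduality map $M \to M^{**} = \Hom_R(M^*, R)$, $x \mapsto (f \mapsto f(x))$, has image contained in $\Hom_R(M^*, \tr_R(M))$: indeed, for any $f \in M^*$ and $x \in M$, the element $f(x)$ lies in $f(M) \subseteq \sum_{g \in M^*} \Im g = \tr_R(M)$. Since $M$ is reflexive, the biduality map is an isomorphism onto $M^{**}$, so comparing with the inclusion $\Hom_R(M^*, \tr_R(M)) \subseteq \Hom_R(M^*, R)$ forces this inclusion to be an equality, yielding $M \cong \Hom_R(M^*, \tr_R(M))$.

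With this identification in hand, the $A$-action on $M$ becomes transparent: for $a \in A$ and $\varphi \in \Hom_R(M^*, \tr_R(M))$, I would set $(a\cdot \varphi)(f) := a\,\varphi(f)$, which lands in $A \cdot \tr_R(M) \subseteq \tr_R(M)$ by the definition of $A$, and is plainly $R$-linear in $f$. A direct check shows that when $a = r \in R$ this recovers the original scalar multiplication: post-composition by $r$ carries the evaluation map at $x$ to the evaluation map at $rx$. So the new action genuinely extends the $R$-action on $M$.

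I do not anticipate a real obstacle here; the only substantive step is the observation that reflexivity is precisely what allows the target of the biduality pairing to be shrunk from $R$ down to $\tr_R(M)$, after which the $A$-action appears by pure formalism from the endomorphism ring of $\tr_R(M)$.
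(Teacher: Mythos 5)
Your argument is correct and is essentially the paper's proof seen through the tensor--hom adjunction: the paper dualizes the surjection $M^*\otimes_R M \twoheadrightarrow \tr_R(M)$ to embed $A\cong\Hom_R(\tr_R(M),R)$ into $\Hom_R(M,M^{**})=\End_R(M)$, whereas you use reflexivity to identify $M$ with $\Hom_R(M^*,\tr_R(M))$ and let $A\cong\End_R(\tr_R(M))$ act by post-composition on the target. If anything, your packaging makes the final step --- that the resulting action genuinely extends scalar multiplication by $R$ --- more explicit than the paper's closing sentence, which leaves to the reader the check that the induced endomorphism $\widehat{\alpha}$ is multiplication by $\alpha$ inside $\rmQ(R)\otimes_R M$.
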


\begin{proof} 
By applying the $R$-dual $(-)^*$ to the canonical map $M^*\otimes_R M \to \tr_R(M) \to 0$ (see the definition of trace ideals), we obtain that 
\[
0 \to \Hom_R(\tr_R (M), R) \to \Hom_R(M, M^{**})=\Hom_R(M, M)
\] 
since $M$ is reflexive. On the other hand, we have 
\[
\Hom_R(\tr_R (M), R)\cong \Hom_R(\tr_R (M), \tr_R(M))\cong A
\] 
by Lemma \ref{tuesday}. Hence, passing to the map $A\to \Hom_R(M, M)$, we can regard $M$ as an $A$-module. 
We note that if we restrict the $A$-action to $R$, then it coincides with the $R$-action. 
\end{proof}

\begin{rem} 
With the same assumption and notation of Proposition \ref{reflexive}, the $A$-action extending the $R$-action of $M$ is unique.
\end{rem}

\begin{proof} 
Assume that there are two actions $\circ$ and $\circ'$ extending the $R$-action of $M$. Let $\alpha\in A$ and $x\in M$, and set $\alpha=a/s$, where $a,s\in R$ and $s$ is a non-zerodivisor of $R$. Then,
\begin{align*} 
s(\alpha\circ x)=(s\alpha)\circ x = a\circ x = 
a\circ' x = (s\alpha)\circ' x =s(\alpha\circ' x).
\end{align*}
It follows that $s(\alpha\circ x - \alpha\circ' x)=0$. This proves that $\alpha\circ x = \alpha\circ' x$. Indeed, all non-zerodivisors of $R$ are non-zerodivisors of $M$ since $\Ass M=\Ass M^{**} \subseteq \Ass R$.
\end{proof}

Proposition \ref{reflexive} fails if $M$ is a non-reflexive torsionfree module. 

\begin{ex} 
Let $R=K[[t^3, t^4, t^5]]$ and $I=(t^3, t^4)$, where $K[[t]]$ denotes the formal power series ring over a filed $K$. Then, since $I$ is isomorphic to the canonical module of $R$ and $R$ is not Gorenstein (\cite[Example 2.1.9]{GW}), $I$ is a non-reflexive torsionfree $R$-module. 
It is easy to check that $\tr_R(I)$ contains the maximal ideal $\fkm=(t^3, t^4, t^5)$, thus $\tr_R(I): \tr_R(I)=\fkm:\fkm=K[[t]]$. However, $I\ne I K[[t]]$, that is, $I$ does not become a $K[[t]]$-module.
\end{ex}

Later we will use the following fact. 

\begin{prop} {\rm(\cite[Proposition 2.8(iii)]{Lin})}\label{lindo}
Suppose that $R$ is a local ring. Then, 
$\tr_R(M)=R$ if and only if $M$ has a nonzero free direct summand.
\end{prop}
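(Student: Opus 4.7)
The plan is to prove the two implications separately, using the fact that $R$ is local only for the forward direction.

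For the easy direction, suppose $M$ has a nonzero free direct summand, so we can write $M = R \oplus N$ for some submodule $N$. Then the projection $\pi : M \to R$ onto the first factor lies in $M^*$, and its image is all of $R$; hence $\tr_R(M) \supseteq \Im \pi = R$, so $\tr_R(M) = R$.

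For the forward direction, suppose $\tr_R(M) = R$. By definition of the trace ideal as the image of $M^* \otimes_R M \to R$ via $f \otimes x \mapsto f(x)$, we may write
\[
1 = \sum_{i=1}^{n} f_i(x_i)
\]
for some finite collection of elements $f_i \in M^*$ and $x_i \in M$. Here the locality of $R$ enters: since $\fkm$ is closed under sums, the equality $1 = \sum_i f_i(x_i)$ forces at least one of the scalars $f_i(x_i)$ to be a unit of $R$ (otherwise all summands lie in $\fkm$, contradicting $1 \notin \fkm$). Fix such an index $i$, and set $f = f_i$, $x = x_i$, $u = f(x) \in R^{\times}$.

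Now consider the $R$-linear map $\iota : R \to M$ sending $r \mapsto r x$ and the rescaled dual map $\widetilde{f} = u^{-1} f : M \to R$. Then $\widetilde{f} \circ \iota$ is the identity on $R$, since $\widetilde{f}(\iota(r)) = u^{-1} f(rx) = u^{-1} r f(x) = r$. Therefore $\iota$ is a split monomorphism, yielding an internal direct-sum decomposition
\[
M = Rx \oplus \Ker \widetilde{f},
\]
where $Rx \cong R$ via $\iota$ (injectivity of $\iota$ is automatic from the existence of the retraction $\widetilde{f}$, and $x \neq 0$ since $f(x)$ is a unit). This exhibits the desired nonzero free direct summand.

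There is no real obstacle here; the only subtlety is observing the locality step, which is precisely what rules out the analogous statement over non-local rings (where $\tr_R(M) = R$ can occur without a free summand because the unit may only arise as a non-trivial sum of non-units).
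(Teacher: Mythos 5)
Your proof is correct. The paper itself gives no argument for this proposition---it is quoted from Lindo's paper \cite{Lin} as Proposition 2.8(iii)---so there is nothing to compare against; your argument is the standard one (write $1=\sum_i f_i(x_i)$, use locality to extract a unit $u=f(x)$, and split off $Rx\cong R$ via the retraction $u^{-1}f$), and both directions, including the observation that injectivity of $r\mapsto rx$ follows from the existence of the retraction, are sound. Your closing remark correctly identifies why locality is essential: over a non-local ring a non-free rank-one projective already has trace ideal $R$ with no free summand.
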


\subsection{Arf rings}

In this subsection we survey the notion of Arf rings. Let $R$ be a Cohen-Macaulay semi-local ring with $\dim R_\fkm=1$ for all $\fkm\in \Max R$. Here, we should not restrict ourselves to local rings. Indeed, the Arf property is often inherited by intermediate rings between $R$ and $\ol{R}$ as shown in Theorem \ref{2.7}, but intermediate rings are not necessarily local rings even if $R$ is local. 


\begin{defn}[\cite{L}]\label{2.2}
$R$ is called an {\it Arf ring} if the following two conditions hold:
\begin{enumerate}[{\rm (a)}] 
\item
For every integrally closed ideal $I$ which contains a non-zerodivisor of $R$, there exists an element $a\in I$ such that $I=\overline{(a)}$, i.e., $I^{n+1}=aI^n$ for some $n\ge0$.  
\item
If $x, y, z \in R$ such that $x$ is a non-zerodivisor of $R$ and $\frac{y}{x}, \frac{z}{x}\in \overline{R}$, then $\frac{yz}{x}\in R$.
\end{enumerate}
\end{defn}

\begin{rem}{\rm (\cite[Corollary 2.5]{L})} \label{maximal}
$R$ is Arf if and only if $R_\fkm$ is Arf for all maximal ideals $\fkm$ of $R$.
\end{rem}

The notion of Arf rings is characterized as follows.

\begin{thm} [{\cite[Theorem 2.2]{L}}]\label{2.3}
 Let $R$ be a Cohen-Macaulay semi-local ring with $\dim R_\fkm=1$ for all $\fkm\in \Max R$. Then, the following conditions are equivalent:
\begin{enumerate}[{\rm (a)}] 
\item $R$ is an Arf ring.
\item For every integrally closed ideal $I$ that contains a non-zerodivisor of $R$, there exists $a\in I$ such that $I^2=aI$.
\end{enumerate}
\end{thm}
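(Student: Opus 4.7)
My plan is to handle the two directions separately; they require genuinely different ideas.

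For (a) $\Rightarrow$ (b), I start from Arf condition (a), which gives some $a \in I$ and some $n \ge 0$ with $I^{n+1} = aI^n$, and aim to upgrade this to $I^2 = aI$. First, the determinant trick applied to $bI^n \subseteq aI^n$ for each $b \in I$ shows that every element of $I$ is integral over $(a)$; hence $I = \overline{(a)}$, the element $a$ is a non-zerodivisor of $R$ (since $I$ contains one), and $b/a \in \overline{R}$ for every $b \in I$. Now for $b,c \in I$, the ratios $b/a, c/a$ both lie in $\overline{R}$, so Arf condition (b) applied to $(a,b,c)$ yields $bc/a \in R$. A second application of the determinant trick---this time to the inclusion $(bc/a)I^{n-1} \subseteq I^n$ coming from $I^{n+1} = aI^n$---shows that $bc/a$ is integral over $I$; combined with $bc/a \in R$ and the fact that $I$ is integrally closed, this forces $bc/a \in I$, i.e.\ $bc \in aI$. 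The reverse inclusion $aI \subseteq I^2$ is immediate from $a \in I$.

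For (b) $\Rightarrow$ (a), Arf condition (a) is obtained for free, since $I^2 = aI$ is already the required identity with $n = 1$. The delicate point is Arf condition (b): given $x,y,z \in R$ with $x$ a non-zerodivisor and $y/x, z/x \in \overline{R}$, show $yz/x \in R$. My plan is to apply the hypothesis to the integrally closed ideal $I := x\overline{R} \cap R$, which contains $x$ as well as $y$ and $z$. This yields $a \in I$ with $I^2 = aI$, and then the identity $(I/a)(I/a) = I^2/a^2 = I/a$ together with $1 = a/a \in I/a$ identifies $B := I/a$ as a subring of $\overline{R}$ extending $R$, equal to $I : I$. Since $y,z \in I = aB$, we have $y/a, z/a \in B$, whence $yz/a \in aB = I \subseteq R$.

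The main obstacle is then to bridge from $yz/a \in R$ to $yz/x \in R$. Both $a$ and $x$ lie in $I = \overline{(a)} = \overline{(x)}$, so $a/x$ is a unit in $\overline{R}$, but not necessarily in $R$ or in $B$; consequently multiplying $yz/a$ by $a/x$ only keeps us inside $\overline{R}$. Closing this gap will, I expect, require a more refined choice of ideal: for instance, replacing $I$ by the contraction $xS \cap R$ of $xS$ along $R \hookrightarrow S := R[y/x, z/x]$ (with $S$ a finite $R$-submodule of $\overline{R}$), so that the resulting witness $a$ stands in a controllable relation with $x$; or else iterating the stability hypothesis along a chain of integrally closed ideals, progressively enlarging the endomorphism ring $I : I$ until it captures the ratio $a/x$. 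Making one of these refinements precise is the delicate technical point of this direction.
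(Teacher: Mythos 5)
The paper itself offers no proof of this theorem---it is quoted directly from Lipman---so your proposal has to stand on its own merits. Your direction (a) $\Rightarrow$ (b) is correct and complete: the two applications of the determinant trick (first to $bI^{n}\subseteq aI^{n}$ to obtain $I=\overline{aR}$, the non-zerodivisor property of $a$, and $b/a\in\overline{R}$; then to $(bc/a)I^{n-1}\subseteq I\cdot I^{n-1}$ to place $bc/a$ in $\overline{I}=I$ once Arf condition (b) has put it in $R$) are exactly the right moves, and the degenerate case $n=0$ is trivial.

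The gap you flag in (b) $\Rightarrow$ (a) is genuine, but neither of the refinements you sketch is needed: your ideal $I=x\overline{R}\cap R=\overline{xR}$ was already the correct choice, and the missing bridge is simply the identity $aI=xI$. To see it: since $I=\overline{xR}$ and $R$ is Noetherian, $(x)$ is a reduction of $I$, say $I^{r+1}=xI^{r}$; on the other hand $I^{2}=aI$ iterates to $I^{k+1}=a^{k}I$ for every $k$, so $a^{r}I=I^{r+1}=xI^{r}=xa^{r-1}I$. The element $a$ is a non-zerodivisor (from $x^{2}\in I^{2}=aI$ it divides the non-zerodivisor $x^{2}$), so cancelling $a^{r-1}$ gives $aI=xI$ (and the case $r=0$, i.e.\ $I=xR$, is immediate). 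Hence $yz\in I^{2}=aI=xI\subseteq xR$, that is, $yz/x\in I\subseteq R$; you do not even need the intermediate ring $B=I:I$ for this direction. With this one observation inserted, your argument becomes a complete proof.
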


According to the condition that $I^2=aI$, we note the following fact. 

\begin{fact}(\cite[Lemma 1.11]{L})\label{am1142}
Let $a\in I$ be a reduction of $I$, that is, $I^{n+1}=aI^n$ for some $n>0$. Then the following are equivalent. 
\begin{enumerate}[\rm(1)] 
\item $I^2=aI$.
\item $I:I = a^{-1}I$. 
\end{enumerate}
Moreover, if the above condition holds, then $I^2=bI$ for any reduction $b\in I$. 
\end{fact}

Let $\operatorname{J}(R)$ denote the Jacobson radical of $R$. We note that $\operatorname{J}(R)$ is an integrally closed ideal containing a non-zerodivisor of $R$ (\cite[Remark 1.1.3(6)]{SH}). Set 
\[
\displaystyle R_1=\bigcup_{i\ge0}[\operatorname{J}(R)^i:\operatorname{J}(R)^i], \quad 
\text{and define recursively} \quad 
R_n=
\begin{cases}
\  R & \ \ (n=0)\\
\ [R_{n-1}]_1 & \ \ (n>0)\\
\end{cases}
\]
for each $n\ge 0$. Then, we obtain the tower
$$R=R_0\subseteq R_1\subseteq \cdots \subseteq R_n \subseteq \cdots$$
of rings in $\ol{R}$. Every $R_n$ is a Cohen-Macaulay semi-local ring such that all maximal ideals are of height one. 
By using this tower of rings, we obtain another characterization of Arf rings as follows.

\begin{thm}[{\cite[Theorem 2.2]{L}}]\label{2.7}
The following conditions are equivalent:
\begin{enumerate}[{\rm (a)}]
\item
$R$ is an Arf ring.
\item
$[R_n]_\m$ has maximal embedding dimension for every $n\ge0$ and $\m\in\Max R_n$, that is, the embedding dimension coincides with the multiplicity.
\end{enumerate}
\end{thm}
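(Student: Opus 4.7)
The plan is to reduce to the local case and then proceed by induction along the tower $R \subseteq R_1 \subseteq R_2 \subseteq \cdots$, exploiting the tight link between the Arf property and maximal embedding dimension via the transform $R \mapsto R_1$. Since both the Arf condition and the maximal-embedding-dimension condition are local at each maximal ideal, and the formation of $R_n$ commutes with such localization, I may reduce to the case where $(R,\fkm)$ is a one-dimensional Cohen-Macaulay local ring. The central auxiliary observation is that $\fkm$ is automatically integrally closed (a unit cannot satisfy a monic polynomial with coefficients in $\fkm$), together with the key lemma: $R$ has maximal embedding dimension iff $\fkm^2 = a\fkm$ for some $a \in \fkm$, iff $R_1 = (1/a)\fkm$, iff $\fkm R_1 = aR_1$; the forward direction of this lemma follows because $\fkm^i = a^{i-1}\fkm$ for all $i \ge 1$ yields $\fkm^i : \fkm^i = (1/a)\fkm$.

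For (a) $\Rightarrow$ (b), applying Theorem \ref{2.3} to the integrally closed ideal $\fkm$ gives $\fkm^2 = a\fkm$ for some $a \in \fkm$, which is maximal embedding dimension by the key lemma. To iterate, I would establish that the Arf property is inherited by $R_1$: via the identification $R_1 = (1/a)\fkm$, every $\fkm'$-primary integrally closed ideal of $R_1$ (for a maximal ideal $\fkm'$ of $R_1$) can be tracked back to an integrally closed ideal of $R$, and the Arf relation $J^2 = bJ$ in $R$ transfers to the analogous relation in $R_1$. Iterating, each $R_n$ is Arf and hence has maximal embedding dimension at every localization.

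For (b) $\Rightarrow$ (a), given an $\fkm$-primary integrally closed ideal $I$ containing a non-zerodivisor, I want $I^2 = aI$ for some $a \in I$, which will give the Arf property via Theorem \ref{2.3}. The crucial step is to show that $I R_n$ becomes principal for sufficiently large $n$, following from the successive maximal-embedding-dimension reductions along the tower: each transform replaces the generators of the relevant ideal with a single generator after passing to the next ring, just as in the key lemma. Descending this principalization back to $R$ and using the compatibility of the $R_n$-action then produces the required $a \in I$.

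For the concluding claim $\ol{R} = \bigcup_{n\ge 0} R_n$, the inclusion $\bigcup R_n \subseteq \ol{R}$ is immediate from the construction, since each $[\operatorname{J}(R)^i : \operatorname{J}(R)^i]$ lies in $\ol R$. Under condition (b), the tower eventually forces each localization of some $R_n$ to have $\fkm_n R_n$ principal, i.e., to be a DVR, at which point $R_n$ is normal and coincides with $\ol{R}$ (any intermediate normal overring contained in $\ol R$ in the one-dimensional setting must equal $\ol R$, since $\ol R$ is integral over it). The main obstacles I expect are (i) the inheritance of the Arf property by $R_1$ in the direction (a) $\Rightarrow$ (b), which requires a careful correspondence of integrally closed ideals through the transform, and (ii) the principalization $IR_n = (\text{principal})$ in the direction (b) $\Rightarrow$ (a), which carries the genuinely inductive content of the theorem.
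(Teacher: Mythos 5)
First, note that the paper does not prove this statement at all: it is quoted verbatim from Lipman \cite[Theorem~2.2]{L}, so there is no in-paper proof to compare against. Your overall strategy --- induct along the tower $R\subseteq R_1\subseteq R_2\subseteq\cdots$, using the equivalence between maximal embedding dimension and stability of the maximal ideal ($\fkm^2=a\fkm$) --- is indeed the skeleton of Lipman's argument. But as written the proposal has genuine gaps at exactly the points you flag as ``obstacles,'' and these are where the real content lies. The missing engine for both directions is the lemma that an integrally closed ideal $I\subseteq\operatorname{J}(R)$ containing a non-zerodivisor satisfies $IR_1=I$: any $x\in R_1\subseteq\ol{R}$ satisfies a monic equation over $R$, whence $xy$ is integral over $I$ for $y\in I$, so $IR_1\subseteq\ol{I}=I$. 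Only with this in hand is the transform $I\mapsto I/a$ (where $\fkm^2=a\fkm$, $R_1=\fkm/a$) a well-defined \emph{integrally closed ideal of $R_1$}, which is what both your ``tracking back'' in (a)$\Rightarrow$(b) and your ``principalization'' in (b)$\Rightarrow$(a) require. You also need to say why the process terminates: the induction is on the colength $\ell_R(R/I)$, which strictly drops under $I\mapsto I/a$; ``for sufficiently large $n$'' has no justification otherwise. Two smaller issues: the reduction to the local case does not persist along the tower, since $R_1$ may be semi-local even when $R$ is local, so the induction must be run in the semi-local setting from the start; and the equivalence ``maximal embedding dimension $\Leftrightarrow$ $\fkm^2=a\fkm$'' needs infinite residue fields (or the passage to $S^{-1}R[X]$ that the paper itself uses in Proposition~\ref{traceint}).

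The argument you give for the final claim $\ol{R}=\bigcup_{n\ge0}R_n$ would fail as stated. You argue that some $R_n$ eventually becomes a DVR; but $\ol{R}$ is in general only semi-local (and $R$ need not be a domain), and, more seriously, the tower stabilizes at a finite stage only when $\ol{R}$ is module-finite over $R$, which is not a hypothesis of the theorem. What one actually shows is that every element $x=y/d\in\ol{R}$ lies in some $R_n$: since $y\in\ol{dR}$ and, by the Arf property, $\ol{dR}^2=d\,\ol{dR}$ (a principal reduction $dR$ of a stable ideal forces the stabilizing element to be $d$ itself, up to the usual cancellation), one gets $x\in\ol{dR}:\ol{dR}$, and then one must compare this blowup ring with the $R_n$'s. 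So the concluding claim needs a separate argument, not a corollary of ``eventual regularity.''
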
 

The following examples can be verified by using Theorem \ref{2.7}.

\begin{ex}[{\cite[Example, p.664]{L}}]\label{2.8}
Let $R$ be a Cohen-Macaulay local ring with $\rme(R)\le 2$. 
Then, $R$ is an Arf ring.
\end{ex}

\begin{ex} \label{2.13}
Let $n>0$, and let $R=K[[t^n, t^{n+1}, \dots, t^{2n-1}]]$ be a subring of the formal power series ring $K[[t]]$. Then, $R$ is an Arf ring. This shows that for any $n>0$, there exist Arf rings with multiplicity $n$.
\end{ex}

Note that Example \ref{2.13} shows that the main result of this article, Theorem \ref{mainthm}, fully generalizes Fact \ref{f11}. We can find more numerous examples of Arf rings, for example, by using a characterization of numerical semigroup Arf rings via numerical semigroup (\cite[Corollary 3.19]{RG}).




\section{Main results}\label{section3}
In this section we prove theorems displayed in the introduction. Throughout this section, we work under the following assumption. 
\begin{setup}
$R$ denotes an Arf ring. 
\end{setup}

We note that $R$ is a one-dimensional semi-local ring by the definition of Arf rings. 
The following is a key of our result.

\begin{prop} \label{traceint}
If $I$ is a trace ideal containing a non-zerodivisor of $R$, then $I$ is an integrally closed ideal.
\end{prop}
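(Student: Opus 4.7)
The plan is to use the Arf identity from Theorem~\ref{2.3}, $\overline{I}^2=a\overline{I}$ for some $a\in\overline{I}$, together with the trace characterization $R:I=I:I$, to derive $\overline{I}\cdot I=aI$, from which $\overline{I}\subseteq I$ will follow by cancelling the non-zerodivisor $a$. Theorem~\ref{2.3} applies because $\overline{I}$ is an integrally closed ideal containing a non-zerodivisor. A priori the element $a$ only lies in $\overline{I}$; but $I$ and $\overline{I}$ share the same minimal reductions (having the same integral closure), so one can in fact arrange $a\in I$, at worst after a faithfully-flat base change enlarging the residue fields, which preserves both the Arf property and the trace condition.

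Set $R':=\overline{I}/a\subseteq\rmQ(R)$. The identities $R'R'=\overline{I}^2/a^2=\overline{I}/a=R'$ and $R'\overline{I}=\overline{I}^2/a=\overline{I}$ make $R'$ a subring of $\overline{R}$ containing $R$, and $R'I\subseteq R'\overline{I}=\overline{I}\subseteq R$ gives $R'\subseteq R:I$. For the reverse inclusion, given $\alpha\in R:I$, the trace condition places $\alpha$ in $I:I\subseteq\overline{R}$; since $a\in I$, $\alpha a\in\alpha I\subseteq R$, and multiplying the integral equation of $\alpha$ over $R$ by $a^n$ shows $\alpha a$ is integral over $I$, so $\alpha a\in\overline{I}$ and $\alpha\in R'$. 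Hence $R'=R:I$.

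The trace identity $I(R:I)=I$ now reads $IR'=I$; multiplying by $a$ (and using $aR'=\overline{I}$) converts it into $I\overline{I}=aI$. For any $y\in\overline{I}$, the element $ya$ lies in $\overline{I}\cdot I=aI$, so $ya=ax$ for some $x\in I$, and cancelling the non-zerodivisor $a$ gives $y=x\in I$. This proves $\overline{I}\subseteq I$, and the reverse containment is trivial.

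The main obstacle is securing $a\in I$, needed in the identification $R:I\subseteq R'$ to ensure $\alpha a\in R$ for $\alpha\in R:I$. Once $a\in I$ is available, the rest of the argument is a careful tracking of fractional-ideal identities.
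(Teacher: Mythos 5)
Your proposal is correct and follows essentially the same route as the paper's proof: both pass to a faithfully flat extension with enlarged residue fields to secure a principal reduction generated by a non-zerodivisor $a\in I$ (equivalently $t\in IT$), use the Arf property to get $\overline{I}^2=a\overline{I}$, invoke the trace condition $R:I=I:I$ to identify $R:I$ inside $\overline{I}:\overline{I}=\overline{I}/a$, and conclude from $I=I(R:I)$ that $I\overline{I}=aI$, whence $\overline{I}\subseteq I$ by cancelling $a$. The only cosmetic difference is that you establish the exact equality $R:I=\overline{I}/a$, whereas the paper gets by with the single inclusion $\overline{I}T:\overline{I}T\subseteq IT:IT$; the underlying mechanism is identical.
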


\begin{proof} 
By passing to the localization of each maximal ideal containing $I$, we may assume that $R$ is a local ring with maximal ideal $\fkm$ (see Remarks \ref{rem2.3}(a) and \ref{maximal}, \cite[Proposition 1.1.4]{SH}). 
Let $T=R[X]_{\fkm R[X]}$, where $R[X]$ is the polynomial ring over $R$. 
Then, $IT$ is a trace ideal in $T$ by Remark \ref{rem2.3}(a). Since the residue field $T/\fkm T$ is infinite, there exists a non-zerodivisor $t\in IT$ such that $tT$ is a reduction of $IT$. It is sufficient to show that $IT=\ol{I}T$ since $R\to T$ is faithfully flat. 

Because $IT\subseteq \ol{I}T$, we obtain that 
\[
T\subseteq \ol{I}T: \ol{I}T\subseteq T:\ol{I}T \subseteq T:IT = IT:IT, 
\]
where the last equality follows from Corollary \ref{ab2.2}. 
By multiplying $IT$ to the above, 
\[
IT\subseteq (\ol{I}T: \ol{I}T)IT \subseteq (IT:IT)IT=IT.
\] 
It follows that $IT = (\ol{I}T: \ol{I}T)IT$. 
Meanwhile, there exists a non-zerodivisor $a\in \ol{I}$ of $R$ such that 
$\ol{I}^2=a\ol{I}$ because $R$ is Arf. Thus, $(\ol{I}T)^2=a(\ol{I}T)$. By Fact \ref{am1142}, we have $(\ol{I}T)^2=t(\ol{I}T)$.  
Hence, it follows that 
\[
\ol{I}T:\ol{I}T = t^{-1} \ol{I}T
\]
by Fact \ref{am1142}. Therefore, we obtain that 
\[
IT=(\ol{I}T: \ol{I}T)IT=(t^{-1} \ol{I}T)IT. 
\]
On the other hand, since $t\in IT$, we have $t\ol{I}T\subseteq IT\cdot\ol{I}T \subseteq (\ol{I}T)^2=t\ol{I}T$. Hence, $(t^{-1} \ol{I}T)IT =t^{-1} t\ol{I}T =\ol{I}T$. Thus, we obtain that $IT = \ol{I}T$ as desired. 
\end{proof}

The converse of Proposition \ref{traceint} does not hold true. 

\begin{ex} 
Let $R=K[[t^2, t^3]]$ and $I=(t^3, t^4)$, where $K[[t]]$ denotes the formal power series ring over a field $K$. Then, $R$ is of multiplicity two; hence, $R$ is an Arf ring. Furthermore, $I=t^3K[[t]]=t^3\ol{R}$ is an integrally closed ideal of $R$. However, $\tr_R(I)=(t^2, t^3) \ne I$.
\end{ex}

In contrast to the above example, we will obtain that all integrally closed ideals are isomorphic to some trace ideals (Theorem \ref{equiv}). We prepare the following. 

\begin{prop} \label{keyprop}
 Let $M$ be a reflexive module of positive rank. Set $A=\tr_R(M):~\tr_R(M)$. Then, we have $\tr_A(M)=A\cong \tr_R(M)$.
\end{prop}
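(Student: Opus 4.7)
The plan is to set $I := \tr_R(M)$, use Proposition \ref{traceint} and the Arf property to extract an element $a \in I$ with $I^2 = aI$, and then transport both assertions through a canonical $A$-module isomorphism $A \cong I$.

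First, Remark \ref{rem2.3} ensures that $I$ contains a non-zerodivisor of $R$. Proposition \ref{traceint} then yields that $I$ is integrally closed, so by Theorem \ref{2.3} one can choose $a \in I$ with $I^2 = aI$. From this I would verify that the map $\phi : A \to I$, $\alpha \mapsto a\alpha$, is an $A$-module isomorphism: injectivity comes from $a$ being a non-zerodivisor, and surjectivity from the observation that for $x \in I$ one has $(x/a)\cdot I = xI/a \subseteq I^2/a = I$, so $x/a \in I:I = A$ with $\phi(x/a) = x$; the $A$-equivariance is automatic since both $A$ and $I$ sit inside $\rmQ(R)$ with $A$ acting by multiplication. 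This already settles $A \cong \tr_R(M)$.

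It remains to show $\tr_A(M) = A$. The main step is to identify $\Hom_R(M, A)$ with $\Hom_A(M, A)$: given $f \in \Hom_R(M, A)$, $\alpha \in A$, and $x \in M$, write $\alpha = b/s$ with $s$ a non-zerodivisor of $R$ and $b = s\alpha \in R$; then $R$-linearity gives $s\cdot f(\alpha x) = f(bx) = bf(x) = s\alpha\cdot f(x)$, and since $A \subseteq \rmQ(R)$ is $s$-torsion-free, cancelling $s$ yields $f(\alpha x) = \alpha f(x)$. Once this identification is in hand, every $g \in M^*$ satisfies $g(M) \subseteq \tr_R(M) = I$ and so factors through $I$; post-composing with $\phi^{-1}$ gives an element of $\Hom_A(M, A)$, and conversely every $A$-linear map $M \to A$, when composed with $\phi$, lands in $I \subseteq R$ and hence lies in $M^*$. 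Summing images, $\tr_A(M) = \phi^{-1}\!\left(\sum_{g \in M^*} g(M)\right) = \phi^{-1}(\tr_R(M)) = \phi^{-1}(I) = A$. The principal obstacle is the identification $\Hom_R(M, A) = \Hom_A(M, A)$, which is what legitimizes transporting the trace computation between the two rings; everything else is a routine consequence of the Arf defining relation $I^2 = aI$.
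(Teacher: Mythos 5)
Your argument is correct and follows essentially the same route as the paper: both identify $I=\tr_R(M)$ as an integrally closed ideal via Remark \ref{rem2.3} and Proposition \ref{traceint}, pick a reduction element with $I^2=aI$ so that $A=I:I=\frac{I}{a}$, and use the non-zerodivisor cancellation trick to show that $R$-linear maps into $A$ are automatically $A$-linear, whence $\tr_A(M)=\frac{\tr_R(M)}{a}=A$. The one thing you should make explicit is that the expression $f(\alpha x)$ presupposes $\alpha x\in M$, i.e., that $M$ carries an $A$-module structure extending the $R$-action --- this is exactly where the reflexivity hypothesis enters, via Proposition \ref{reflexive}, and it is the only place it is used.
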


\begin{proof} 
Set $I=\tr_R(M)$. Then, $I$ is an integrally closed ideal containing a non-zerodivisor of $R$ by Remark \ref{rem2.3}(b) and Proposition \ref{traceint}. We choose $t\in I$ such that $I^2=tI$, and set $A=I:I=t^{-1} I$ by Fact \ref{am1142}. Then, $M$ is an $A$-module by Proposition \ref{reflexive}. We prove that $\tr_A(M)=A$. Indeed, let $f\in \Hom_R(M, R)$. Then, $\Im f\subseteq I$ since $I$ is a trace ideal. We can define the map $g: M \to A$, where $x\mapsto t^{-1}f(x)$. The map $g$ is of course $R$-linear. Furthermore, the following argument proves that $g$ is $A$-linear:
for all $\alpha\in A$ and all $x\in M$, by letting $\alpha=a/s$, where $a,s\in R$ and $s$ is a non-zerodivisor of $R$, we obtain that 
\[
s\alpha g(x)=ag(x)=g(ax)=g(s \alpha x)=s g(\alpha x),
\]
where the fourth equality follows from $\alpha x\in M$ since $M$ is an $A$-module. It follows that $s(\alpha g(x)-g(\alpha x))=0$; hence, $\alpha g(x)=g(\alpha x)$ since $M$ is torsionfree.

Therefore, we obtain that 
\begin{align*} 
\tr_A(M) = \sum_{h\in \Hom_A(M, A)} \Im h \supseteq \sum_{f\in \Hom_R(M, R)} t^{-1}\Im f = t^{-1}\tr_R(M) = A.
\end{align*}
Hence, $\tr_A(M)=A = t^{-1} \tr_R(M) \cong \tr_R(M)$.
\end{proof}

\begin{thm}\label{equiv} 
Let $R$ be an Arf ring. 
For an ideal $I$ containing a non-zerodivisor of $R$, the following are equivalent:
\begin{enumerate}[{\rm (a)}]  
\item $I$ is reflexive as an $R$-module, i.e., $I$ is a divisorial ideal.
\item $I$ is isomorphic to some trace ideal.
\item $I$ is isomorphic to some integrally closed ideal.
\end{enumerate} 
When this is the case, $I\cong \tr_R(I)\cong I^*$ and $\tr_R(I)$ is an integrally closed ideal. 
\end{thm}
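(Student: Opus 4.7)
The plan is to establish the circle (a) $\Rightarrow$ (b) $\Rightarrow$ (c) $\Rightarrow$ (a), extracting $I \cong \tr_R(I) \cong I^*$ and the integral closedness of $\tr_R(I)$ as byproducts. The key setup, used throughout: whenever $I$ is integrally closed, the Arf property (Theorem \ref{2.3}) provides $a \in I$ with $I^2 = aI$, from which a direct calculation gives $A := I:I = \tfrac{1}{a}I$, so $I = aA$ is free of rank $1$ over $A$; in particular $I \cong A$ as $R$-modules.

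The implication (a) $\Rightarrow$ (b) follows immediately from Proposition \ref{traceint}. For (c) $\Rightarrow$ (a), I would apply Proposition \ref{keyprop} with $M = I$: taking $A = \tr_R(I):\tr_R(I)$ yields $\tr_A(I) = A$ and $A \cong \tr_R(I)$. The ring $A$ is a finite birational extension of $R$, hence a one-dimensional Cohen--Macaulay semi-local ring with $\rmQ(A) = \rmQ(R)$. Localizing at each maximal ideal $\fkn$ of $A$ and applying Proposition \ref{lindo} (together with Remark \ref{rem2.3}) shows that $I_\fkn$ admits a nonzero free $A_\fkn$-summand; the rank-one hypothesis on $I$ (over both $R$ and $A$) then forces $I_\fkn \cong A_\fkn$, and hence $I \cong A \cong \tr_R(I)$, establishing (a) and the isomorphism $I \cong \tr_R(I)$.

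For (a) $\Rightarrow$ (c) I would reduce to the case that $I$ is a trace ideal itself. Then $I$ is integrally closed by Proposition \ref{traceint}, and with the above notation
\[
\tr_R(I) \,=\, I \cdot (R : I) \,=\, aA \cdot \tfrac{1}{a}(R : A) \,=\, A \fkc \,=\, \fkc,
\]
where $\fkc := R : A$ is the conductor (automatically an ideal of $A$, so $A\fkc = \fkc$). Since $I = \tr_R(I) = \fkc$, we get $R : I = \tfrac{1}{a}\fkc = \tfrac{1}{a}I$, and therefore
\[
I^{**} \,=\, R : (R : I) \,=\, R : \tfrac{1}{a}I \,=\, a(R : I) \,=\, a \cdot \tfrac{1}{a}I \,=\, I,
\]
proving reflexivity. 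The same calculation gives $I^* = R : I = \tfrac{1}{a}\fkc \cong \fkc = I$, so $I \cong I^*$.

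The main obstacle is (b) $\Rightarrow$ (a). For $I$ integrally closed the identity $\tr_R(I) = \fkc$ still holds, and $\fkc$ is a trace ideal (hence integrally closed by Proposition \ref{traceint}); since $I = aA \cong A$, showing $I \cong \tr_R(I)$ reduces to proving that $\fkc$ is a principal ideal of $A$, giving $\fkc \cong A \cong I$ as $A$-modules. My plan is to exploit the Arf tower $R = R_0 \subseteq R_1 \subseteq \cdots \subseteq \ol{R}$: Theorem \ref{2.7} forces each $\fkm_i$ to be a principal $R_{i+1}$-ideal $t_i R_{i+1}$, and the structure theorem at the end of Section \ref{section2} identifies $A$ with some $R_{n+1}$ (under the assumption that $\ol{R}$ is local) and $\fkc = R : R_{n+1}$ with the telescoping principal ideal $(t_0 t_1 \cdots t_n) R_{n+1}$. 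The delicate point will be reducing the general semi-local Arf setting to the case $\ol{R}$ local, presumably by localizing at each maximal ideal of $\ol{R}$ and gluing. Once $\fkc \cong A$ is in place, $I \cong \fkc = \tr_R(I)$ closes the circle, and combined with the previous steps this establishes the stated equivalences and the final assertions $I \cong \tr_R(I) \cong I^*$ along with the integral closedness of $\tr_R(I)$.
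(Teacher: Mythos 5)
Your implications (a) $\Rightarrow$ (b) and (c) $\Rightarrow$ (a) coincide with the paper's proof, and your (a) $\Rightarrow$ (c) computation (reducing to a trace ideal $I$, getting $I=\fkc:=R:A$ with $A=I:I=\frac{1}{a}I$, and checking $R:(R:I)=I$ directly) is correct, though the paper does not need it separately. The genuine gap is (b) $\Rightarrow$ (a), which is the one implication your circle cannot do without: as you set things up you only obtain (a) $\Leftrightarrow$ (c) $\Rightarrow$ (b), so everything hinges on the tower argument you sketch, and that sketch does not work as stated. First, the telescoping formula $R:R_{n+1}=(t_0t_1\cdots t_n)R_{n+1}$ is false: since $\fkm_i=t_iR_{i+1}$, the right-hand side equals $\fkm_0\fkm_1\cdots\fkm_n=I$ itself, so your formula would force $\tr_R(I)=\fkc=I$ for \emph{every} integrally closed $I$, contradicting the paper's Example 3.2 ($R=K[[t^2,t^3]]$, $I=(t^3,t^4)=\fkm_0\fkm_1$ has $\tr_R(I)=\fkm\ne I$; here $R:R_2=t^2K[[t]]=\fkm$ while $(t_0t_1)R_2=t^3K[[t]]=I$). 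Second, the identification of $A=I:I$ with some $R_{n+1}$ and the whole structure theorem you invoke are only available when $R$ and $\ol{R}$ are local, whereas the theorem is stated for an arbitrary semi-local Arf ring; ``localizing at maximal ideals of $\ol{R}$ and gluing'' is not an argument, and in particular integral closedness and trace-ideal-ness do not obviously glue along such a reduction.

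What you are missing is exactly the paper's Claim: if $I$ is integrally closed with $I^2=tI$, then $I=R:(R:I)$, i.e.\ (b) $\Rightarrow$ (c) directly, after which your (c) $\Rightarrow$ (a) closes the circle. The proof is short and avoids the tower entirely: with $A=I:I=\frac{I}{t}$ one has $R:(R:I)=t\bigl(R:(R:A)\bigr)$, and since $R:A$ is an $A$-module, $R:(R:A)=(R:A):(R:A)$ is a subring of $\ol{R}$; hence $R:(R:I)$ is a stable ideal with reduction $t$, so $I\subseteq R:(R:I)\subseteq\ol{(t)}=\ol{I}=I$. Note that this Claim (applied to $I=aA$) is also precisely the statement $A=R:(R:A)=\fkc:\fkc$ that you would need to conclude that the conductor $\fkc$ is principal over $A$ (one knows $\fkc$ is a trace ideal, hence integrally closed and stable, so $\fkc=cB$ with $B=\fkc:\fkc\supseteq A$; without the Claim you cannot rule out $B\supsetneq A$). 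So your intended route can be repaired, but only by proving the very divisoriality statement the paper uses, at which point the tower machinery is superfluous.
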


\begin{proof} 
(b) $\Rightarrow$ (c) follows from Proposition \ref{traceint}. 
(c) $\Rightarrow$ (a) follows from the following claim.

\begin{claim}\label{claim}
If $I$ is an integrally closed ideal containing a non-zerodivisor of $R$ and $I^2=tI$ for some $t\in I$, then $I=R:(R:I)$.
\end{claim}

\begin{proof}[Proof of Claim \ref{claim}]
Set $A=I:I=t^{-1}I$ (Fact \ref{am1142}). Then, $R:(R:I)=R:(R:tA)=t(R:(R:~A))$. Because $A$ is a subring of $\ol{R}$, we have $(R:A)A{\cdot}A = (R:A)A \subseteq R$, i.e., $(R:A)A \subseteq R:A$. 
Thus $(R:A)A=R:A$. It follows that 
\[
R:(R:A)= R:(R:A)A=(R:A):(R:A) \cong \Hom_R(R:A, R:A).
\] 
Thus, $R:(R:A)$ is a subring of $\ol{R}$; hence, $[R:(R:A)]^2=R:(R:A)$. Hence, $[R:(R:I)]^2=t [R:(R:I)]$. In particular, $(t)$ is a reduction of $R:(R:I)$. Therefore, by \cite[Corollary 1.2.5]{SH}, we obtain that
\[
I \subseteq R:(R:I)\subseteq \ol{(t)}=\ol{I}=I,
\]
which concludes the assertion.
\end{proof}

(a) $\Rightarrow$ (b): Set $A=\tr_R(I):\tr_R(I)$. By Proposition \ref{keyprop}, we have $\tr_A(I)=A$. Hence, by Remark \ref{rem2.3}(a) and Proposition \ref{lindo}, $I$ is a locally free $A$-module of rank one for all maximal ideals of $A$. 
It follows that $I$ is a free $A$-module of rank one because $A$ is semi-local (\cite[Lemma 1.4.4]{BH}). 
Hence, $I\cong A\cong \tr_R(I)$ as $R$-modules. 

When this is the case, $R:I\cong R:\tr_R(I)= \tr_R(I):\tr_R(I)= t^{-1}\tr_R(I) \cong I$, where $t\in \tr_R(I)$ is a reduction of $\tr_R(I)$.
\end{proof}

Now, we are in a position to prove the second main theorem. 

\begin{thm}\label{main2}
Let $(R, \fkm)$ be an Arf local ring. Suppose that $\ol{R}$ is a local ring. If $M$ is a reflexive $R$-module with positive rank, then $M$ decomposes to a direct sum of $\fkm$-primary integrally closed ideals of $R$.
\end{thm}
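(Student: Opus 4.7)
The plan is to proceed by induction on $r = \rank_R M$. The base case $r = 0$ is vacuous (only $M = 0$ qualifies), so fix $r \ge 1$ and assume the conclusion for all smaller ranks. My goal is to split off a single $\fkm$-primary integrally closed ideal summand from $M$, leaving a reflexive complement of rank $r - 1$.

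Set $I = \tr_R(M)$ and $A = I : I$. By Remark \ref{rem2.3}, $I$ contains a non-zerodivisor, so by Proposition \ref{traceint} $I$ is integrally closed; since $\dim R = 1$, $I$ is either $R$ or $\fkm$-primary. Proposition \ref{reflexive} endows $M$ with an $A$-module structure extending the $R$-structure, and Proposition \ref{keyprop} yields $\tr_A(M) = A$ together with $A \cong I$ as $R$-modules.

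The crucial technical point is that $A$ is a local ring, so that Proposition \ref{lindo} becomes available over $A$. Since every $\alpha \in A$ satisfies $\alpha I \subseteq I$, the determinant trick gives $\alpha \in \ol{R}$, so $A \subseteq \ol{R}$. The extension $A \hookrightarrow \ol{R}$ is integral and $A$ is a finite $R$-module, hence semi-local; combining this with the hypothesis that $\ol{R}$ is local, lying-over forces every maximal ideal of $A$ to be the contraction of the unique maximal ideal of $\ol{R}$, leaving $A$ with a single maximal ideal.

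With $A$ local and $\tr_A(M) = A$, Proposition \ref{lindo} provides an $A$-module splitting $M \cong A \oplus N$, which is also an $R$-module decomposition via restriction of scalars. The summand $A$, viewed as an $R$-module, is isomorphic to $I$, and is therefore either an $\fkm$-primary integrally closed ideal, or $R$ itself in the degenerate case where $M$ has a free $R$-summand. The complement $N$ is a direct summand of the reflexive module $M$, hence is itself reflexive, with $\rank_R N = r - 1$ because $I$ has $R$-rank $1$. Applying the induction hypothesis to $N$ completes the argument. I expect the locality of $A$ to be the main obstacle; everything else is a routine unwinding of the propositions established in Sections \ref{section2} and \ref{section3}.
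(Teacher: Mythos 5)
Your proposal is correct and follows essentially the same route as the paper: peel off a summand isomorphic to $A=\tr_R(M):\tr_R(M)$ via Propositions \ref{keyprop} and \ref{lindo}, observe the complement is again reflexive, and induct on the rank. The only difference is that you spell out the justification for $A$ being local (lying over for the integral extension $A\subseteq\ol{R}$ together with the hypothesis that $\ol{R}$ is local), a point the paper's proof leaves implicit when invoking Proposition \ref{lindo} over $A$.
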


\begin{proof}
Set $A_1=\tr_R(M): \tr_R(M)$ and $r=\rank_R M$. Note that $A_1$ is local since $\ol{R}$ is local. By Propositions \ref{lindo} and \ref{keyprop}, $M\cong A_1 \oplus N_1$ for some $R$-module $N_1$. $N_1$ is reflexive because $N_1$ is a direct summand of a reflexive module $M$. We also obtain that $N_1$ has rank $r-1$. 
Therefore, by induction on the rank of $M$, we have 
\[
M\cong A_1\oplus A_2 \oplus \cdots \oplus A_r \oplus N_r
\]
where, for $1\le i \le r$, $N_i$ is some reflexive $R$-module of rank $r-i$ and $A_i=\tr_R(N_{i-1}):~\tr_R(N_{i-1})$. Since $N_r$ is reflexive of rank zero, $N_r=0$. Since $A_i\cong \tr_R(N_{i-1})$ by Proposition \ref{keyprop}, we conclude the assertion by Theorem \ref{equiv}.
\end{proof}

As an application of Theorem \ref{main2}, we have the following.

\begin{cor}\label{corcor} 
Let $(R, \fkm)$ be an Arf local domain. Suppose that $\ol{R}$ is a local ring and finitely generated as an $R$-module. Then, there are only finitely many indecomposable reflexive $R$-modules up to isomorphism.
\end{cor}

\begin{proof}
Due to Theorem \ref{main2}, all reflexive modules are decomposed into integrally closed ideals up to isomorphism. On the other hand, there are only finitely many integrally closed ideals up to isomorphism (\cite[Corollary 5.4]{Is}). Therefore, indecomposable reflexive modules are finite up to isomorphism. Because $R$ is a domain, the notion of reflexive modules is equivalent to that of the first syzygies of maximal Cohen-Macaulay modules (e.g., \cite[Lemma 4.1]{HKS}).
\end{proof}

\begin{rem}
Let $\Omega \mathrm{CM}(R)$ denote the category of first syzygies of some maximal Cohen-Macaulay modules. Thus, $\Omega \mathrm{CM}(R)$ consists of all $R$-modules $M$ such that there exists an exact sequence $0\to M \to F \to X \to 0$ with a free $R$-module $F$ and a maximal Cohen-Macaulay $R$-module $X$. 

Suppose that $R$ is a local ring of dimension one and generically Gorenstein, that is, $R_\fkp$ is Gorenstein for all associated prime ideals $\fkp$. Then, it is known that $\Omega \mathrm{CM}(R)$ is the same as the category of reflexive modules (for example, see \cite[Lemma 4.1]{HKS}). 

With this perspective, Corollary \ref{corcor} can be rephrased by saying that $\Omega \mathrm{CM}(R)$ is of finite type. 
\end{rem}


\begin{rem}
The condition on $M$ in Corollary \ref{corcor} cannot be weakened from reflexive to torsionless. Indeed, it is known that for any positive integer $n$, there is an indecomposable maximal Cohen-Macaulay $R$-module of rank $n$ if $R$ is a one-dimensional Cohen-Macaulay local ring with multiplicity $\ge 4$ (\cite[4.2 Theorem]{LW}). On the other hand, we have examples of Arf rings with multiplicity $\ge 4$ (Example \ref{2.13}). 
\end{rem}



\end{document}